\newtheorem{theorem}{Theorem}
\newtheorem{lemma}[theorem]{Lemma}
\newtheorem*{1}{Theorem}
\begin{document}

\title{Calculus of generalized hyperbolic tetrahedron}

\author{Ren Guo}

\address{School of Mathematics, University of Minnesota, Minneapolis, MN, 55455}

\email{guoxx170@math.umn.edu}

\subjclass[2000]{51M10, 57M50}

\keywords{generalized hyperbolic tetrahedron, Jacobian matrix, symmetry, derivative of the law of cosine}

\begin{abstract} We calculate the Jacobian matrix of the dihedral angles of a generalized hyperbolic tetrahedron as functions of edge lengths and find the complete set of symmetries of this matrix.
\end{abstract}

\maketitle

\section{Introduction}

\subsection{Tetrahedron}

Motivated by studying the polyhedral geometry of triangulated 3-manifolds, Luo \cite{Lu08} calculated the Jacobian matrix of the dihedral angles of a hyperbolic (Euclidean or spherical) tetrahedron as functions of edge lengths. This Jacobian matrix enjoys many symmetries. Some of the symmetries were discovered by Schl\"afli, Wigner \cite{Wi59}, Taylor-Woodward \cite{TW05}. Luo discovered the complete set of symmetries of the Jacobian matrix. 

Denote by $v_1,v_2,v_3,v_4$ the vertexes of a hyperbolic (Euclidean or spherical) tetrahedron. Let $a_{ij}$ and $x_{ij}$ be the dihedral angles and the edge length at the edge $v_iv_j$. The angle $a_{ij}$ for $i,j\in \{1,2,3,4\}$ is a function of the lengths $x_{12}, x_{13}, x_{14},x_{23},x_{24},x_{34}.$

\begin{1}[Luo] 
$$P^{ij}_{rs} = \frac 1{\sin a_{ij} \sin a_{rs}}\frac{\partial a_{ij}}{\partial x_{rs}} $$ satisfies

\begin{enumerate}

\item \textup{(Schl\"afli)} $P^{ij}_{rs} = P_{ij}^{rs}$.

\item \textup{(Wigner, Taylor-Woodward)} $P^{ij}_{kl} =P^{ik}_{jl} = P^{il}_{jk}$ for $\{i,j,k,l\}=\{1,2,3,4\}$.

\item $P^{ij}_{ik} = -P^{ij}_{kl} \cos a_{jk}$ for $\{i,j,k,l\}=\{1,2,3,4\}$.

\item $P^{ij}_{ij}= P^{ij}_{kl} w_{ij}$ for $\{i,j,k,l\}=\{1,2,3,4\}$, where 
$$w_{ij}=\frac{\cos a_{ij}\cos a_{jk}\cos a_{ki}
+\cos a_{ij}\cos a_{jl}\cos a_{li}
+\cos a_{ik}\cos a_{jl}
+\cos a_{il}\cos a_{jk}}{\sin^2a_{ij}}.$$

\item $P^{ij}_{rs} = P^{i'j'}_{r's'}$ where $\{i,j\} \neq \{r,s\}$
and $\{i,j,i',j'\}=\{r,s,r',s'\}=\{1,2,3,4\}$.
\end{enumerate}
\end{1}

Yakut, Savas and Kader \cite{YSK09} calculated the Jacobian matrix for a hyperbolic or spherical tetrahedron and represented each entry of the matrix in terms of $x_{ij}$. The symmetries of the Jacobian matrix are not obvious in their result. 

\subsection{Generalized hyperbolic tetrahedron}

In this paper we calculate the Jacobian matrix of the dihedral angles of a \textit{generalized hyperbolic tetrahedron} as functions of edge lengths. We find a uniform way to deal with the 15 types of generalized hyperbolic tetrahedra. The complete set of symmetries of the Jacobian matrix are discovered. It is a generalization of Luo's result. Our main theorem is the following. 

Let $a_{ij}$ and $x_{ij}$ be the dihedral angle and the edge length at the edge $e_{ij}$ of a generalized hyperbolic tetrahedron. The angle $a_{ij}$ for $i,j\in \{1,2,3,4\}$ is a function of the lengths $x_{12}, x_{13}, x_{14},x_{23},x_{24},x_{34}.$ Denote by $G$ the \textit{Gram matrix} of the generalized hyperbolic tetrahedron. Let $G_{ij}$ be the matrix obtained by deleting the $i-$th row and $j-$th column of the Gram matrix $G$.

\begin{theorem}\label{thm:main}
The Jacobian matrix is $$\frac{\partial(a_{12},a_{13},a_{14},a_{23},a_{24},a_{34})}{\partial(x_{12},x_{13},x_{14},x_{23},x_{24},x_{34})}=
\sqrt{\frac{\det G_{11}\det G_{22}\det G_{33}\det G_{44}}{-(\det G)^3}}DMD,$$
where 
$$D=diag(\sin a_{12},\sin a_{13},\sin a_{14},\sin a_{23},\sin a_{24},\sin a_{34})$$ 
is a diagonal matrix and
$$ 
M=\left(
\begin{matrix}
w_{12}       & -\cos a_{23} & -\cos a_{24} & -\cos a_{13} & -\cos a_{14} & 1               \\
-\cos a_{23} & w_{13}       & -\cos a_{34} & -\cos a_{12} & 1            & -\cos a_{14}    \\
-\cos a_{24} & -\cos a_{34} & w_{14}       & 1            & -\cos a_{12} & -\cos a_{13}    \\
-\cos a_{13} & -\cos a_{12} & 1            & w_{23}       & -\cos a_{34} & -\cos a_{24}    \\
-\cos a_{14} & 1            & -\cos a_{12} & -\cos a_{34} & w_{24}       & -\cos a_{23}    \\ 
1            & -\cos a_{14} & -\cos a_{13} & -\cos a_{24} & -\cos a_{23} & w_{34}          \\
\end{matrix}
\right),
$$
where $$w_{ij}=\frac{\cos a_{ij}\cos a_{jk}\cos a_{ki}
+\cos a_{ij}\cos a_{jl}\cos a_{li}
+\cos a_{ik}\cos a_{jl}
+\cos a_{il}\cos a_{jk}}{\sin^2a_{ij}}.$$

\end{theorem}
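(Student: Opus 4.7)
The approach rests on the uniform duality, valid across all $15$ types of generalized hyperbolic tetrahedra, between the dihedral-angle Gram matrix $G$ and the edge lengths. In the Minkowski model, $G$ has signature $(3,1)$, so $\det G<0$, while each principal minor $\det G_{ii}$ is positive. The cosine law of the second kind takes the uniform form
\[
f_{ij}(x_{ij})\;=\;\frac{\mathrm{cof}_{ij}(G)}{\sqrt{\det G_{ii}\,\det G_{jj}}},
\]
where $f_{ij}$ is a smooth strictly monotone function whose precise shape (a hyperbolic cosine, a linear piece, or a cosine, together with an appropriate sign) depends only on the types---hyperbolic, ideal, or hyperideal---of the two endpoints $v_i,v_j$. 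Bundling the type-by-type case analysis inside $f_{ij}$ is what makes a unified treatment possible.

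Given this identity, I would differentiate it and invert the resulting linear system. Substituting $dG_{pq}=-\sin a_{pq}\,da_{pq}$ and applying Jacobi's cofactor formula, the differential of the right-hand side becomes a linear combination of the $da_{pq}$ whose coefficients are built from $2\times 2$ and $3\times 3$ minors of $G$: the $2\times 2$ minors come from differentiating the numerator $\mathrm{cof}_{ij}(G)$, while the $3\times 3$ principal minors come from differentiating the normalization $\sqrt{\det G_{ii}\det G_{jj}}$. This produces a matrix identity of the form $F\,dx=\widetilde N\,da$ with $F=\mathrm{diag}(f_{ij}'(x_{ij}))$, so that the Jacobian is $\widetilde N^{-1}F$.

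The main obstacle is to simplify $\widetilde N^{-1}F$ to the form $\kappa\,DMD$ predicted by the theorem. The central tool is Jacobi's identity on complementary minors, which for a $4\times 4$ matrix relates each $k\times k$ minor of $G$ to a complementary $(4-k)\times(4-k)$ minor of $\mathrm{cof}(G)$ through a power of $\det G$, and thereby permits both the inversion of $\widetilde N$ and the collapse of the resulting minor expressions back to polynomials in $\cos a_{pq}$. A case analysis on $|\{i,j\}\cap\{p,q\}|$ then yields the three entry patterns of $M$: the off-diagonal entry is $1$ when the two edges are disjoint, $-\cos a_{kl}$ when a single vertex is shared (with $\{k,l\}$ the complementary pair of vertices), and the four-term numerator in the formula for $w_{ij}$ on the diagonal, with its denominator $\sin^2 a_{ij}$ supplied by the two $D$ factors. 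The scalar prefactor $\sqrt{\det G_{11}\det G_{22}\det G_{33}\det G_{44}/(-\det G)^3}$ finally collects the six normalizations $\sqrt{\det G_{ii}\det G_{jj}}$ from the cosine law together with the powers of $\det G$ produced by Jacobi's identity during the inversion; the sign is consistent because $\det G<0$ renders the radicand positive.
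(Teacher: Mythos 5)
Your route --- write the second cosine law as $f_{ij}(x_{ij})=\mathrm{cof}_{ij}(G)/\sqrt{\det G_{ii}\det G_{jj}}$, differentiate implicitly, and invert the resulting linear system with Jacobi's complementary-minor identity --- is genuinely different from the paper's proof, which inverts nothing: it runs the chain rule through the link triangle at a vertex and the face triangles, quoting the derivative cosine laws already established for those lower-dimensional pieces (Lemmas \ref{thm:LC} and \ref{thm:FC}) and then converting the resulting amplitudes into minors of the vertex Gram matrix via Lemmas \ref{thm:2Gram} and \ref{thm:3Gram}. As written, your argument has concrete gaps. First, you conflate two dual objects: the identity you differentiate lives on the dihedral-angle Gram matrix, but the prefactor in the theorem is built from the vertex Gram matrix $G=(\langle v_i,v_j\rangle)$ of Section 5, for which Lemma \ref{thm:2Gram} gives $\det G_{ii}=-A_{jkl}^2\le 0$, not positive as you assert. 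Translating minors of one Gram matrix into minors of its dual is itself a nontrivial Jacobi-type step that you do not supply. Second, the claimed uniformity over the $15$ types fails exactly where it matters: when a vertex is ideal, the principal $3\times 3$ minor of the angle Gram matrix attached to that vertex vanishes (its link is a Euclidean triangle), so the normalization $\sqrt{\det G_{ii}\det G_{jj}}$ in your ``uniform'' cosine law is zero and the identity you propose to differentiate does not exist in that case. The obstruction sits in the vanishing denominator, not in the shape of $f_{ij}$, so it cannot be absorbed into a case analysis of $f_{ij}$ alone; the paper avoids it precisely by working with the decorated quantities $\tau_{ij}$, $\rho^i_{jk}$ and the amplitudes, which are defined and nonzero uniformly for all types.

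Finally, the heart of the theorem --- that after inversion every entry of $\widetilde N^{-1}F$ collapses to $1$, $-\cos a_{kl}$, or $w_{ij}$ up to the common scalar --- is asserted rather than computed. This is essentially the computation of Heard and of Yakut--Savas--Kader, which the paper explicitly cites as \emph{not} exhibiting these symmetries; extracting the clean form of $M$ from the cofactor expressions is the actual content to be proved, and ``a case analysis on $|\{i,j\}\cap\{p,q\}|$ then yields the three entry patterns'' does not discharge it.
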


Note that the matrix $DMD$ is the same for the 15 types of generalized hyperbolic tetrahedra. What is different is the factor in front of the matrix $DMD$. This factor depends only on the Gram matrix $G$. Luo's result about the symmetries of $P^{ij}_{rs}$ can be interpreted as the symmetries of the matrix $M$ as follows:
\begin{enumerate}
\item $M$ is a symmetric matrix.

\item Any antidiagonal entry of $M$ is $1.$

\item The $(ij,ik)-$th entry of $M$ is $-\cos a_{jk}.$

\item The $(ij,ij)-$th entry of $M$ is $w_{ij}.$

\item Except the diagonal entries, $M$ is symmetric about the antidiagonal axis. 
\end{enumerate}

Theorem \ref{thm:main} can be considered as a generalization from 2 dimensions to 3 dimensions of the derivative of cosine law of a generalized hyperbolic triangle which is studied systematically in \cite{GL09} Lemma 3.5.

Heard \cite{He05} calculated the Jacobian matrix of a generalized hyperbolic tetrahedron and represented each entry of the matrix in terms of $c_{ij}=(-1)^{i+j}\det G_{ij}.$ The symmetries of the Jacobian matrix are not obvious in his result. 

\subsection{Plan of the paper} In section 2, we recall the definition of a generalized hyperbolic tetrahedron and some properties. In section 3, the derivative of the law of cosine for a link at a vertex of a generalized hyperbolic tetrahedron is summarized. In section 4, the derivative of the law of cosine for a face of a generalized hyperbolic tetrahedron is summarized. In section 5, we calculate the determinant of the Gram matrix $G$. Theorem \ref{thm:main} is proved in section 6.

\section{Definition and properties}

The 4-dimensional Minkowski space is the real vector space $\mathbb{R}^4$ equipped the inner product:
$$\langle X,Y \rangle=x_1y_1+x_2y_2+x_3y_3-x_4y_4,$$
where $X=(x_1,x_2,x_3,x_4), Y=(y_1,y_2,y_3,y_4).$

The 3-dimensional hyperbolic space is identified with the positive sheet of the hyperboloid of two sheets:
$$\mathbb{H}^3=\{X\in \mathbb{R}^4: \langle X,X \rangle=-1,x_4>0 \}.$$ 

The positive half of the light cone is 
$$C^+=\{X\in \mathbb{R}^4: \langle X,X \rangle=0,x_4>0 \}$$ and the positive half of the unit sphere is
$$S^+=\{X\in \mathbb{R}^4: \langle X,X \rangle=1,x_4>0\}.$$ 

For any point $v_i\in \mathbb{H}^3\cup C^+ \cup S^+$, the \textit{type} of the point is the number
$$\varepsilon_i=-\langle v_i,v_i \rangle = \left\{
\begin{array}{ccc}
1, & \text{if} & v_i\in \mathbb{H}^3,\\
0, & \text{if} & v_i\in C^+,\\
-1,& \text{if} & v_i\in S^+.
\end{array}
\right.
$$

For any point $v_i\in \mathbb{H}^3\cup C^+ \cup S^+$, we associate $v_i$ a geometric object in $\mathbb{H}^3$ denoted by $P_{\varepsilon_i}(v_i)$ as follows. 

\begin{enumerate}
\item If $v_i\in \mathbb{H}^3,$ then $P_{\varepsilon_i}(v_i)=P_{1}(v_i)=v_i,$ i.e., the point itself.

\item If $v_i\in C^+,$ then 
$P_{\varepsilon_i}(v_i)=P_{0}(v_i)=\{X\in \mathbb{H}^3: \langle X,u \rangle \geq -\frac12\},$ i.e., 
a horoball in $\mathbb{H}^3.$

\item If $v_i\in S^+,$ then 
$P_{\varepsilon_i}(v_i)=P_{-1}(v_i)=\{X\in \mathbb{H}^3: \langle X,u \rangle \geq 0\},$ i.e., 
a half space of $\mathbb{H}^3.$
\end{enumerate}

Given two points $v_i,v_j\in \mathbb{H}^3\cup C^+ \cup S^+$, if $P_{\varepsilon_i}(v_i)\cap P_{\varepsilon_j}(v_j)=\emptyset,$ there is a geodesic segment in $\mathbb{H}^3$ whose length realizes the distance between $P_{\varepsilon_i}(v_i)$ and $P_{\varepsilon_j}(v_j)$. It is denoted by $e_{ij}.$

Given three points $v_i,v_j,v_k\in \mathbb{H}^3\cup C^+ \cup S^+$, if $P_{\varepsilon_i}(v_i)$, $P_{\varepsilon_j}(v_j)$ and $P_{\varepsilon_k}(v_k)$ are disjoint with each other, draw the three geodesic segments $e_{ij},e_{jk},e_{ki}.$ There is a totally geodesic polygon in $\mathbb{H}^3$ bounded by $e_{ij},e_{jk},e_{ki}$ and the boundary of $P_{\varepsilon_i}(v_i)$, $P_{\varepsilon_j}(v_j)$ and $P_{\varepsilon_k}(v_k)$. This polygon is denoted by $\triangle ijk.$ In fact $\triangle ijk$ is a generalized hyperbolic triangle which is studied in \cite{GL09}. 

Given four points $v_1,v_2,v_3,v_4\in \mathbb{H}^3\cup C^+ \cup S^+$ such that $P_{\varepsilon_1}(v_1)$, $P_{\varepsilon_2}(v_2)$, $P_{\varepsilon_3}(v_3)$ and $P_{\varepsilon_4}(v_4)$ are disjoint with each other, there are four polygons $\triangle 234,$ $\triangle 134,$ $\triangle 124,$ $\triangle 123.$ 

If $P_{\varepsilon_i}(v_i)$ is a horoball, the intersections $\partial P_{\varepsilon_i}(v_i) \cap \triangle ijk,$ $\partial P_{\varepsilon_i}(v_i) \cap \triangle ikl,$ and $\partial P_{\varepsilon_i}(v_i) \cap \triangle ilj$ are three Euclidean line segments which bound a Euclidean triangle, i.e., the \textit{link} at $v_i$, denoted by $LK(v_i).$

If $P_{\varepsilon_i}(v_i)$ is a half space, the intersections $\partial P_{\varepsilon_i}(v_i) \cap \triangle ijk,$ $\partial P_{\varepsilon_i}(v_i) \cap \triangle ikl,$ and $\partial P_{\varepsilon_i}(v_i) \cap \triangle ilj$ are three hyperbolic geodesic segments which bound a hyperbolic triangle, i.e., the \textit{link} at $v_i$, denoted by $LK(v_i).$

If the polygons $\triangle 234, \triangle 134, \triangle 124, \triangle 123$ and the links $LK(v_i)$ with $v_i\in C^+\cup S^+$ for $i\in \{1,2,3,4\}$ bound an object in $\mathbb{H}^3$ with positive volume, this object is a \textit{generalized hyperbolic tetrahedron} which is denoted by $T_{1234}$. Its edges are the geodesic segments $e_{ij}$ for $i,j\in \{1,2,3,4\}$. And its faces are $\triangle 234, \triangle 134, \triangle 124, \triangle 123$. 

If $v_i\in \mathbb{H}^3,$ the link $LK(v_i)$ is a spherical triangle which is the intersection of $T_{1234}$ and a sufficiently small sphere centered at $v_i$. 

A generalized hyperbolic tetrahedron is uniquely determined by the four points $v_1,v_2,v_3,v_4$. According to different types of the points $v_i$, there are 15 types of generalized hyperbolic tetrahedra. 

\section{Link}

A link $LK(v_i)$ is a spherical, Euclidean or hyperbolic triangle if $v_i\in \mathbb{H}^3, C^+$ or $S^+$ respectively. 
If $v_i\in \mathbb{H}^3\cup S^+$, denote by $b^i_{kl},b^i_{lj},b^i_{jk}$ the length of edges of $Lk(v_i)$: $\partial P_{\varepsilon_i}(v_i) \cap \triangle ikl,$ $\partial P_{\varepsilon_i}(v_i) \cap \triangle ilj$ and $\partial P_{\varepsilon_i}(v_i) \cap \triangle ijk$. If $v_i\in C^+,$ let $b^i_{kl},b^i_{lj},b^i_{jk}$ be TWICE of the length of edges of $Lk(v_i)$. Denoted by $a_{ij}$ the dihedral angle between the face $\triangle ijk$ and $\triangle ijl$ for $\{i,j,k,l\}=\{1,2,3,4\}$. The dihedral angles $a_{ij},a_{ik},a_{il}$ become the opposite inner angles of $Lk(v_i)$.

We introduce a function of $b^i_{jk}$ and its derivative as follows
\begin{align}
\rho^i_{jk}&=\int_0^{b^i_{jk}}\cos(\sqrt{\varepsilon_i} s)ds,\\
\rho'^i_{jk}&=\cos(\sqrt{\varepsilon_i} b^i_{jk}),
\end{align}
where $\varepsilon_i$ is the type of $v_i.$

The \textit{amplitude} of the link $LK(v_i)$ is defined as \cite{Fe89}
\begin{align}\label{fml:LA}
A^i=\rho^i_{jk}\rho^i_{jl}\sin a_{ij}
\end{align}
which only depends on the link $Lk(v_i).$

The derivative of the law of cosine of a spherical, Euclidean or hyperbolic triangle is derived in \cite{CL03, Lu06} and has the uniform formula.
\begin{lemma}\label{thm:LC}
  \begin{align*}
  \frac{\partial a_{ij}}{\partial b^i_{kl}}&=\frac{\rho^i_{kl}}{A^i},\\
  \frac{\partial a_{ij}}{\partial b^i_{lj}}&=\frac{\rho^i_{kl}}{A^i}(-\cos a_{il}).
  \end{align*}
\end{lemma}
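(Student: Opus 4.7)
The plan is to derive both identities by implicitly differentiating the law of cosines for the link triangle $LK(v_i)$, which is a triangle in a space of constant curvature $\varepsilon_i$. I would first write the unified law of cosines, valid for $\varepsilon_i \in \{1,-1\}$:
$$\rho'^i_{kl} = \rho'^i_{lj}\rho'^i_{jk} + \varepsilon_i\,\rho^i_{lj}\rho^i_{jk}\cos a_{ij},$$
together with its two cyclic permutations (the angle $a_{ij}$ is opposite the edge $b^i_{kl}$, etc.). I will use repeatedly that $\tfrac{d\rho}{db}=\rho'$, $\tfrac{d\rho'}{db}=-\varepsilon_i\rho$, and the generalized Pythagorean identity $(\rho'^i_{lj})^2+\varepsilon_i(\rho^i_{lj})^2=1$. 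The Euclidean case $\varepsilon_i=0$, where the displayed relation degenerates to $1=1$, will be handled separately, either directly from $(b^i_{kl})^2=(b^i_{lj})^2+(b^i_{jk})^2-2\,b^i_{lj}b^i_{jk}\cos a_{ij}$ or by a limit $\varepsilon_i\to 0$ using Taylor expansions of $\rho,\rho'$.

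For the first identity I would differentiate the displayed relation with respect to $b^i_{kl}$, holding $b^i_{lj}$ and $b^i_{jk}$ fixed. The left side becomes $-\varepsilon_i\rho^i_{kl}$, while on the right only $a_{ij}$ depends on $b^i_{kl}$, contributing $-\varepsilon_i\rho^i_{lj}\rho^i_{jk}\sin a_{ij}\cdot(\partial a_{ij}/\partial b^i_{kl})$. Cancelling $\varepsilon_i$ and dividing through by $A^i=\rho^i_{lj}\rho^i_{jk}\sin a_{ij}$ yields $\partial a_{ij}/\partial b^i_{kl}=\rho^i_{kl}/A^i$.

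For the second identity I would differentiate with respect to the adjacent edge $b^i_{lj}$. The left side is now zero and rearrangement gives $\varepsilon_i\rho^i_{lj}\rho^i_{jk}\sin a_{ij}\cdot(\partial a_{ij}/\partial b^i_{lj})=-\varepsilon_i\rho^i_{lj}\rho'^i_{jk}+\varepsilon_i\rho'^i_{lj}\rho^i_{jk}\cos a_{ij}$. After substituting $\cos a_{ij}$ from the original law of cosines and applying the Pythagorean identity to collapse the coefficient of $\rho'^i_{jk}$, the right-hand side reduces, up to a factor of $\varepsilon_i\rho^i_{lj}$, to $-(\rho'^i_{jk}-\rho'^i_{kl}\rho'^i_{lj})$. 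Recognising this quantity by the \emph{other} law of cosines (for the angle $a_{il}$ opposite edge $b^i_{jk}$) as $-\varepsilon_i\rho^i_{kl}\rho^i_{lj}\cos a_{il}$ and cancelling the remaining $\varepsilon_i$ produces $\partial a_{ij}/\partial b^i_{lj}=(\rho^i_{kl}/A^i)(-\cos a_{il})$.

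The hard part will be the uniform handling of the three geometries: the $\varepsilon_i$ factors cancel neatly in the spherical and hyperbolic derivations, but the law of cosines is trivial in Euclidean and must be verified independently. Beyond this, the main algebraic subtlety is the second-formula step, where one must recognise a mixed combination of $\rho'$'s not as the law of cosines for $a_{ij}$ itself (the one being differentiated) but as the law of cosines for a \emph{different} angle of the same link triangle.
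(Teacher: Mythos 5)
Your proof is correct: the paper itself gives no argument for this lemma, merely citing \cite{CL03, Lu06}, and your implicit differentiation of the unified law of cosines $\rho'^i_{kl}=\rho'^i_{lj}\rho'^i_{jk}+\varepsilon_i\rho^i_{lj}\rho^i_{jk}\cos a_{ij}$ --- together with the key step of recognising $\rho'^i_{jk}-\rho'^i_{kl}\rho'^i_{lj}=\varepsilon_i\rho^i_{kl}\rho^i_{lj}\cos a_{il}$ as the law of cosines for the \emph{other} angle $a_{il}$ --- is exactly the standard derivation in those references, and the Euclidean case does reduce to the direct computation you describe (equivalently, to the projection formula $a-b\cos C=c\cos B$). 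The only blemish is the phrase ``up to a factor of $\varepsilon_i\rho^i_{lj}$'': the right-hand side equals $\bigl(-(\rho'^i_{jk}-\rho'^i_{kl}\rho'^i_{lj})\bigr)/\rho^i_{lj}$, i.e.\ the factor is $1/\rho^i_{lj}$, with the $\varepsilon_i$ cancelling only against the coefficient of $\partial a_{ij}/\partial b^i_{lj}$ at the final step.
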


\section{Face}

Let $x_{ij}$ be the length of the edge $e_{ij}$. We introduce a function of $x_{ij}$ and its derivative as follows:
\begin{align}
\tau_{ij}&=\frac12e^{x_{ij}}-\frac12\varepsilon_i\varepsilon_j e^{-x_{ij}},\label{fml:tau1}\\
\tau'_{ij}&=\frac12e^{x_{ij}}+\frac12\varepsilon_i\varepsilon_j e^{-{x_{ij}}}. \label{fml:tau2}
\end{align}

Each face $\triangle jkl$ of $T_{1234}$ is a generalized hyperbolic triangle. It has the edge lengths $x_{kl},x_{lj},x_{jk}$ and the opposite generalized angles $b^j_{kl}, b^k_{lj},b^l_{jk}.$ 

The \textit{amplitude} of the face $\triangle jkl$ is defined as
\begin{align}\label{fml:FA}
A_{jkl}=\tau_{jk}\tau_{jl}\rho^j_{kl}
\end{align}
which only depends on the face.

The derivative of the law of cosine of a generalized hyperbolic triangle is derived in \cite{GL09} and has the uniform formula.
\begin{lemma}[\cite{GL09} Lemma 3.5]\label{thm:FC}
\begin{align*}
  \frac{\partial b^j_{kl}}{\partial x_{kl}}&=\frac{\tau_{kl}}{A_{jkl}},\\
  \frac{\partial b^j_{kl}}{\partial x_{lj}}&=\frac{\tau_{kl}}{A_{jkl}}(-\rho'^l_{jk}).
  \end{align*}
 \end{lemma}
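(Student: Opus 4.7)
The strategy is to compute every partial derivative $\partial a_{ij}/\partial x_{rs}$ by a single application of the chain rule through the link at $v_i$.  For each $i$, the angle $a_{ij}$ is a function of the three side lengths $b^i_{jk},b^i_{kl},b^i_{lj}$ of $LK(v_i)$ via Lemma \ref{thm:LC}, and each such link length is a function of the three edges of its containing face via Lemma \ref{thm:FC}.  The three faces through $v_i$ together contain all six tetrahedron edges, so
\[
\frac{\partial a_{ij}}{\partial x_{rs}}=\sum_{(p,q)}\frac{\partial a_{ij}}{\partial b^i_{pq}}\,\frac{\partial b^i_{pq}}{\partial x_{rs}},
\]
where $(p,q)$ ranges over the three two-element subsets of $\{j,k,l\}$.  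I then verify entry-by-entry that the resulting matrix equals $C\cdot DMD$, for some scalar $C$ which is identified at the end.

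The off-diagonal entries split into two types.  If $\{r,s\}\cap\{i,j\}=\emptyset$, only $(p,q)=(k,l)$ contributes, and the chain rule gives $\partial a_{ij}/\partial x_{kl}=\rho^i_{kl}\tau_{kl}/(A^i A_{ikl})$.  If $\{r,s\}$ shares exactly one vertex with $\{i,j\}$, again only a single link edge contributes and Lemma \ref{thm:LC} injects a $-\cos a$ factor, for instance $\partial a_{ij}/\partial x_{jk}=-\rho^i_{kl}\tau_{jk}\cos a_{ik}/(A^i A_{ijk})$.  In both cases, factoring out $\sin a_{ij}\sin a_{rs}$ exhibits exactly the corresponding entry of $M$ (either $1$ or $-\cos a_{\cdot\cdot}$), and the residual factor is what must be identified as $C$.

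The diagonal entries carry the real content.  Because $x_{ij}$ lies in both faces $\triangle ijk$ and $\triangle ijl$, two terms of the link sum survive, each a product of a $-\cos a$ factor from Lemma \ref{thm:LC} and a $-\rho'$ factor from Lemma \ref{thm:FC}.  The task is to show that the two-term sum equals $C\sin^2 a_{ij}\,w_{ij}$, reproducing all four summands in the numerator of $w_{ij}$.  My plan is to expand each surviving $\rho'$-factor using the law of cosines for the link $LK(v_j)$, which rewrites it as a combination of $\cos a_{ij},\cos a_{jk},\cos a_{jl}$ together with the remaining edges of $LK(v_j)$.  After this substitution and a symmetric regrouping, the two original link-terms split into four: two contribute the triple-cosine summands $\cos a_{ij}\cos a_{jk}\cos a_{ki}$ and $\cos a_{ij}\cos a_{jl}\cos a_{li}$, and two combine into the products $\cos a_{ik}\cos a_{jl}+\cos a_{il}\cos a_{jk}$, matching $w_{ij}$ exactly.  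This is the three-dimensional analogue of the diagonal computation in Luo's closed-tetrahedron result and in the triangle Lemma 3.5 of \cite{GL09}.

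The final step is to identify the prefactor with the claimed radical.  The chain rule gives $C$ explicitly as $\rho^i_{kl}\tau_{kl}/(A^i A_{ikl}\sin a_{ij}\sin a_{kl})$, but this form is attached to the vertex $v_i$.  Consistency across the various chain-rule routes that compute the same entry (starting from the link at $v_i$ versus $v_j$, or passing through different faces) already forces $C$ to be fully symmetric in $v_1,\dots,v_4$.  The computation of $\det G$ and its principal minors in section 5, together with the identification of each amplitude $A^i$ and $A_{jkl}$ with an appropriate Gram-matrix minor, converts this vertex-asymmetric expression into the symmetric form $\sqrt{\det G_{11}\det G_{22}\det G_{33}\det G_{44}/(-\det G)^3}$.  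This final identification is the main technical obstacle, since the chain rule naturally outputs vertex-localized data and only the Gram-matrix identities of section 5 symmetrize them into the uniform radical that is valid across all fifteen types of generalized hyperbolic tetrahedra.
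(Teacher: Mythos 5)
There is a fundamental mismatch here: what you have written is not a proof of Lemma \ref{thm:FC} at all, but a proof sketch of Theorem \ref{thm:main}. Lemma \ref{thm:FC} is a statement purely about a single generalized hyperbolic triangle (a face $\triangle jkl$): it asserts that the generalized angle $b^j_{kl}$, viewed as a function of the three edge lengths $x_{kl},x_{lj},x_{jk}$, has the stated partial derivatives in terms of $\tau_{kl}$ and the amplitude $A_{jkl}=\tau_{jk}\tau_{jl}\rho^j_{kl}$. Your argument instead computes $\partial a_{ij}/\partial x_{rs}$ for the tetrahedron by chaining through the links, and in doing so it explicitly \emph{invokes} Lemma \ref{thm:FC} as one of its two inputs (``each such link length is a function of the three edges of its containing face via Lemma \ref{thm:FC}''). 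As an argument for the lemma itself this is circular; as an argument for the main theorem it is aimed at the wrong target. (For the record, the paper does not reprove this lemma either --- it imports it from \cite{GL09}, Lemma 3.5 --- so the expected content is a two-dimensional computation, not anything involving the Gram matrix of the tetrahedron.)

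A genuine proof would start from the cosine law for a generalized hyperbolic triangle, quoted later in the paper as
\begin{equation*}
\rho'^j_{kl}=\frac{-\varepsilon_j\tau'_{kl}+\tau'_{jk}\tau'_{jl}}{\tau_{jk}\tau_{jl}},
\end{equation*}
and differentiate it implicitly. Using $\frac{d}{dx_{kl}}\tau'_{kl}=\tau_{kl}$ from (\ref{fml:tau1})--(\ref{fml:tau2}) and $\frac{d}{db^j_{kl}}\rho'^j_{kl}=-\varepsilon_j\rho^j_{kl}$ from the definitions (1)--(2), differentiating with respect to $x_{kl}$ (holding $x_{jk},x_{jl}$ fixed) yields $-\varepsilon_j\rho^j_{kl}\,\tau_{jk}\tau_{jl}\,\partial b^j_{kl}/\partial x_{kl}=-\varepsilon_j\tau_{kl}$, which gives the first identity when $\varepsilon_j\neq0$; the second identity follows similarly by differentiating with respect to $x_{lj}$ and using the cosine law once more to recognize the factor $-\rho'^l_{jk}$, and the degenerate case $\varepsilon_j=0$ requires a separate limiting or direct argument. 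None of these steps appears in your proposal, so the statement remains unproved.
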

  
\section{Gram matrix} 

Given four points $v_1,v_2,v_3,v_4\in \mathbb{H}^3\cup C^+ \cup S^+,$ the Gram matrix of $T_{1234}$ determined by $v_1,v_2,v_3,v_4$ is defined as 
$$G=(\langle v_i, v_j \rangle)_{4\times 4}.$$ 
 
\begin{lemma} If $i\neq j,$ then
$\langle v_i, v_j \rangle =-\tau'_{ij}.$
\end{lemma}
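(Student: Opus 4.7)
The statement is a case-by-case verification based on the types $\varepsilon_i,\varepsilon_j\in\{1,0,-1\}$. By symmetry in $i,j$ there are six cases, and the formula $\tau'_{ij}=\tfrac12 e^{x_{ij}}+\tfrac12\varepsilon_i\varepsilon_j e^{-x_{ij}}$ specializes to $\cosh x_{ij}$ when $\varepsilon_i\varepsilon_j=1$, to $\sinh x_{ij}$ when $\varepsilon_i\varepsilon_j=-1$, and to $\tfrac12 e^{x_{ij}}$ when $\varepsilon_i\varepsilon_j=0$. So the plan is to match each of the six specializations with the corresponding hyperbolic distance formula between the objects $P_{\varepsilon_i}(v_i)$ and $P_{\varepsilon_j}(v_j)$.

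For the three cases not involving horoballs, the required identities are classical in the Minkowski model. If $v_i,v_j\in\mathbb{H}^3$, then $P_1(v_i),P_1(v_j)$ are just the two points and the hyperboloid distance formula gives $-\langle v_i,v_j\rangle=\cosh x_{ij}$. If $v_i\in\mathbb{H}^3$ and $v_j\in S^+$, then $P_{-1}(v_j)$ is a halfspace with unit Lorentz normal $v_j$, and the point-to-hyperplane formula yields $-\langle v_i,v_j\rangle=\sinh x_{ij}$. If $v_i,v_j\in S^+$, the disjointness of the two halfspaces forces the bounding hyperplanes to be ultraparallel, and the standard hyperplane-to-hyperplane distance gives $-\langle v_i,v_j\rangle=\cosh x_{ij}$. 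In each case the right-hand side equals $\tau'_{ij}$.

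For the three cases involving at least one horoball, I would use the Busemann-function description. For $v\in C^+$, the function $\beta_v(X)=\log(-2\langle X,v\rangle)$ is a Busemann function for the ideal point represented by $v$, normalized so that it vanishes exactly on $\partial P_0(v)=\{X:\langle X,v\rangle=-\tfrac12\}$ and equals signed distance from that horosphere. This immediately handles the point-horoball case ($v_i\in\mathbb{H}^3$, $v_j\in C^+$), giving $x_{ij}=\beta_{v_j}(v_i)=\log(-2\langle v_i,v_j\rangle)$. For the horoball-halfspace case I would drop the common perpendicular from the halfspace's boundary hyperplane to the horoball and evaluate $\beta_{v_i}$ at the foot of the perpendicular; for the horoball-horoball case I would parametrize the common perpendicular geodesic and compute the lengths cut off by the two horospheres. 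In each case the computation gives $x_{ij}=\log(-2\langle v_i,v_j\rangle)$, hence $\langle v_i,v_j\rangle=-\tfrac12 e^{x_{ij}}=-\tau'_{ij}$.

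The main point requiring care is the constant $\tfrac12$ in the normalization $P_0(v)=\{X:\langle X,v\rangle\geq -\tfrac12\}$ chosen in Section 2: it is precisely what makes $\log(-2\langle X,v\rangle)$, rather than $\log(-\langle X,v\rangle)$, equal the signed distance to the horosphere, and consequently what makes the factor $\tfrac12$ in $\tau'_{ij}$ come out correctly in all three horoball cases. Keeping this constant, together with the sign conventions on $\langle\cdot,\cdot\rangle$, consistent across the mixed cases is the only real bookkeeping; no new geometric input beyond the classical distance formulas is needed.
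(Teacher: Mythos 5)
Your proposal is correct, but it does more work than the paper does: the paper's ``proof'' of this lemma is essentially a citation, splitting into the same cases you identify and referring the reader to Ratcliffe (pp.~62--72) for the two cases with $\varepsilon_i\varepsilon_j\neq 0$, to Heard's thesis for the mixed horoball cases, and to Penner for the horoball--horoball case, with a remark translating Heard's horoball normalization into the paper's $\{X:\langle X,v\rangle\geq-\tfrac12\}$ convention. You instead supply self-contained arguments: the three non-horoball cases via the classical point/hyperplane distance formulas in the Minkowski model (correctly noting that disjointness fixes the signs, e.g.\ forcing $\langle v_i,v_j\rangle=-\cosh x_{ij}$ rather than $+\cosh x_{ij}$ for ultraparallel half-spaces), and the three horoball cases via the Busemann function $\beta_v(X)=\log(-2\langle X,v\rangle)$. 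That last device is a genuinely nice unification: it packages exactly the normalization issue the paper flags (the constant $-\tfrac12$ in the definition of $P_0(v)$ is what makes $\log(-2\langle\cdot,v\rangle)$ vanish on the horosphere and hence makes $\tau'_{ij}=\tfrac12 e^{x_{ij}}$ come out with the right factor), and it recovers in one stroke the paper's two displayed identities $\langle X,Y\rangle=-\tfrac12 e^{d}$. The trade-off is the usual one: the paper's citation route is shorter and defers the sign and normalization bookkeeping to the references, while your route makes the lemma verifiable within the paper and makes the dependence on the horoball convention explicit; to be fully rigorous you would still need to write out the two computations you only sketch (evaluating $\beta_{v_i}$ at the foot of the common perpendicular, and the horoball--horoball parametrization), but there is no gap in the plan.
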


\begin{proof}
When $\varepsilon_i\varepsilon_j \neq 0,$ it is well-known. See, for example, \cite{Ra06} pp 62-72. When $\varepsilon_i\varepsilon_j=0$, see \cite{He05} pp 7-9. When $\varepsilon_i=\varepsilon_j=0$, the formula was obtained in \cite{Pe87}. Note that we use a different convention in the definition of a horoball from the convention in \cite{He05}. Due to our convention, we have the following.

If $X\in C^+, Y\in \mathbb{H}^3\cup S^+,$ then $\langle X, Y \rangle=-\frac12 e^d$ where $d$ is the distance between the horoball associated to $X$ and the vertex or the half space associated to $Y$. 

If $X, Y\in C^+,$ then $\langle X, Y \rangle=-\frac12 e^d$ where $d$ is the distance between the two horoballs associated to $X$ and $Y$.
\end{proof}

Therefore the Gram matrix can be written as 
$$
G=\left(
\begin{matrix}
-\varepsilon_1 & -\tau'_{12}    & -\tau'_{13}    & -\tau'_{14} \\
-\tau'_{12}    & -\varepsilon_2 & -\tau'_{23}    & -\tau'_{24} \\
-\tau'_{13}    & -\tau'_{23}    & -\varepsilon_3 & -\tau'_{34} \\
-\tau'_{14}    & -\tau'_{24}    & -\tau'_{34}    &-\varepsilon_4
\end{matrix}
\right).
$$

Before calculating $\det G$, we recall an analogy in 2 dimensions. Recall that $G_{ij}$ is the matrix obtained by deleting the $i-$th row and $j-$th column of the Gram matrix $G$. 

\begin{lemma}[\cite{GL09} Lemma 3.3]\label{thm:2Gram}
$G_{ii}$ is the Gram matrix of the face $\triangle jkl$ and 
$$\sqrt{-\det G_{ii}}=A_{jkl}$$ 
where $A_{jkl}$ is the amplitude of the face $\triangle jkl$ (\ref{fml:FA}).
\end{lemma}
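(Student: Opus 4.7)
The plan is as follows. The first assertion, that $G_{ii}$ is the Gram matrix of the face $\triangle jkl$, is immediate from the definition: deleting row $i$ and column $i$ of the matrix $G = (\langle v_p, v_q\rangle)$ leaves the $3\times 3$ matrix of inner products among $\{v_j, v_k, v_l\}$, which by definition is the Gram matrix of the three vertices of the face $\triangle jkl$ (regarded as a generalized hyperbolic triangle).

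For the identity $\sqrt{-\det G_{ii}} = A_{jkl}$, the cleanest route is to recognize $G_{ii}$ as the Gram matrix of a generalized hyperbolic triangle and apply the 2-dimensional version \cite{GL09}~Lemma~3.3 directly. For a self-contained derivation, I would expand the $3\times 3$ determinant using the explicit diagonal entries $-\varepsilon_p$ and off-diagonal entries $-\tau'_{pq}$ from the preceding lemma, obtaining
$$-\det G_{ii} = 2\tau'_{jk}\tau'_{jl}\tau'_{kl} + \varepsilon_j\varepsilon_k\varepsilon_l - \varepsilon_j(\tau'_{kl})^2 - \varepsilon_k(\tau'_{jl})^2 - \varepsilon_l(\tau'_{jk})^2.$$
Next, I would apply the generalized law of cosines for the triangle $\triangle jkl$, which expresses $\tau'_{kl}$ in terms of $\tau'_{jk}, \tau'_{jl}, \tau_{jk}, \tau_{jl}$ and $\rho'^j_{kl}$. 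Combined with the algebraic identities $\tau_{pq}^2 = (\tau'_{pq})^2 - \varepsilon_p\varepsilon_q$ (immediate from (\ref{fml:tau1}) and (\ref{fml:tau2})) and the Pythagorean relation $(\rho'^j_{kl})^2 + \varepsilon_j(\rho^j_{kl})^2 = 1$, the right-hand side simplifies to $\tau_{jk}^2\tau_{jl}^2(\rho^j_{kl})^2 = A_{jkl}^2$ by (\ref{fml:FA}). Taking the positive square root yields the claim.

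The main obstacle is that the generalized law of cosines takes a slightly different shape in each of the three cases $\varepsilon_j \in \{-1,0,1\}$ (and similarly for $\varepsilon_k,\varepsilon_l$), and the Pythagorean identity degenerates when $\varepsilon_j = 0$, so that manipulations like solving for $(\rho^j_{kl})^2$ from $(\rho'^j_{kl})^2$ become invalid. Consequently a careful case analysis indexed by $(\varepsilon_j,\varepsilon_k,\varepsilon_l)$, or equivalently by the type of face, is required to push the calculation through uniformly. A useful sanity check is the symmetry of both sides in $(j,k,l)$: the left-hand side $-\det G_{ii}$ is manifestly symmetric, while $A_{jkl}=\tau_{jk}\tau_{jl}\rho^j_{kl}$ inherits the same symmetry from the law of sines for the generalized triangle, confirming that the choice of "central vertex" in the expansion is immaterial.
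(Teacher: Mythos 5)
The paper does not actually prove this lemma: it is imported verbatim from \cite{GL09} (Lemma 3.3 there), so there is no in-paper argument to compare against. Your sketch is nonetheless sound, and it is the natural one --- indeed it is precisely the two-dimensional analogue of the paper's own proof of Lemma~\ref{thm:3Gram}: expand $-\det G_{ii}$ to get $2\tau'_{jk}\tau'_{jl}\tau'_{kl}+\varepsilon_j\varepsilon_k\varepsilon_l-\varepsilon_j(\tau'_{kl})^2-\varepsilon_k(\tau'_{jl})^2-\varepsilon_l(\tau'_{jk})^2$ (your formula checks out), then reduce against a chosen vertex. Concretely, combining $\tau_{pq}^2=(\tau'_{pq})^2-\varepsilon_p\varepsilon_q$ with the quoted law of cosines $\tau_{jk}\tau_{jl}\rho'^j_{kl}=\tau'_{jk}\tau'_{jl}-\varepsilon_j\tau'_{kl}$ and the relation $(\rho'^j_{kl})^2+\varepsilon_j(\rho^j_{kl})^2=1$ gives $\varepsilon_j\bigl(-\det G_{ii}\bigr)=\varepsilon_j\,\tau_{jk}^2\tau_{jl}^2(\rho^j_{kl})^2$, which yields the claim after cancelling $\varepsilon_j$ --- valid only when $\varepsilon_j\neq 0$. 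You correctly identify this degeneration as the obstacle, and your observation that both sides are symmetric in $(j,k,l)$ disposes of every face having at least one non-ideal vertex. The one case your sketch leaves genuinely open is the totally ideal face $\varepsilon_j=\varepsilon_k=\varepsilon_l=0$; there the determinant collapses to $2\tau'_{jk}\tau'_{jl}\tau'_{kl}=\tfrac14 e^{x_{jk}+x_{jl}+x_{kl}}$ and one must invoke the ideal law of cosines $(b^j_{kl})^2/4=e^{x_{kl}-x_{jk}-x_{jl}}$ directly to match $A_{jkl}^2$ --- exactly the device the paper uses in Case~2 of its proof of Lemma~\ref{thm:3Gram}. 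With that case written out, your argument is complete and self-contained, which is arguably an improvement over the paper's bare citation.
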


\begin{lemma}\label{thm:3Gram}
$$\sqrt{-\det G}=\tau_{ij}\tau_{ik}\tau_{il}A^i,$$
where $A^i$ is the amplitude of the link $LK(v_i)$ (\ref{fml:LA}).
\end{lemma}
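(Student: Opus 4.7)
The starting observation is that $G = V^T J V$ where $V = [v_1|v_2|v_3|v_4]$ is the coordinate matrix of the vertices in $\mathbb{R}^{3,1}$ and $J = \mathrm{diag}(1,1,1,-1)$, which immediately gives $\det G = -(\det V)^2$. The claim is thus equivalent to $|\det V| = \tau_{12}\tau_{13}\tau_{14}\,A^1$ (fixing $i=1$ by symmetry). The plan is to decompose each $v_j$ ($j=2,3,4$) in a basis adapted to $v_1$, perform column operations on $V$ to pull out the factors $\tau_{1j}$, and recognise the residual 3-dimensional determinant as $A^1$ via the Gram matrix of the link $LK(v_1)$.

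For $\varepsilon_1 = \pm 1$, I would write $v_j = c_j v_1 + \tau_{1j} E_j$ with $E_j \in v_1^\perp$ a unit vector, spacelike when $\varepsilon_1 = 1$ and timelike when $\varepsilon_1 = -1$. The coefficient $c_j$ is fixed by $\langle v_j, v_1\rangle = -\tau'_{1j}$, and the normalisation $\langle E_j, E_j\rangle = \pm 1$ follows from the identity $\tau_{1j}^2 - (\tau'_{1j})^2 = -\varepsilon_1\varepsilon_j$. Column-reducing the $c_j v_1$ contributions away gives $\det V = \tau_{12}\tau_{13}\tau_{14}\det(v_1, E_2, E_3, E_4)$, and in coordinates where $v_1$ is a standard basis vector this collapses to the 3-volume of $\{E_2,E_3,E_4\}$ in $v_1^\perp$ (positive-definite for $\varepsilon_1=1$, signature $(2,1)$ for $\varepsilon_1=-1$). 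The inner products $\langle E_j, E_k\rangle$ compute via the generalised cosine law for the face $\triangle 1jk$ to $\pm\rho'^1_{jk}$, identifying the Gram matrix of $\{E_j\}$ with that of the spherical or hyperbolic link triangle; the classical triangle identity $|\det| = (A^1)^2$ then closes the case.

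The main obstacle is $\varepsilon_1 = 0$, since $v_1$ is null and orthogonal projection along $v_1$ degenerates. I would introduce an auxiliary null vector $u$ with $\langle v_1, u\rangle = -2$ and decompose $v_j = \alpha_j v_1 + \beta_j u + E_j$ with $E_j$ in the positive-definite 2-plane $\mathrm{span}(v_1, u)^\perp$, which is naturally identified with the horospherical tangent plane at $v_1$. The constraint $\langle v_j, v_1\rangle = -\tau'_{1j}$ forces $\beta_j = \tau_{1j}/2$. After eliminating the $\alpha_j v_1$ column contributions and multilinearly expanding $\det V$, every term containing zero or two-or-more factors of $u$ vanishes (the three $E_j$'s lying in a 2-plane), leaving
\[
\det V = 2\bigl[\beta_2\det(E_3, E_4) - \beta_3\det(E_2, E_4) + \beta_4\det(E_2, E_3)\bigr].
\]
Writing $E_j = 2\tau_{1j} E^f_j$ where $E^f_j$ is the horospheric foot of $v_j$, the alternating sum rearranges as $2\tau_{12}\tau_{13}\tau_{14}\det(E^f_3 - E^f_2, E^f_4 - E^f_2)$, and the last $2\times 2$ determinant equals twice the Euclidean area of the link triangle on the horosphere; since the link's sides are $b^1_{jk}/2$, this area is $A^1/8$, yielding $|\det V| = \tau_{12}\tau_{13}\tau_{14} A^1$ as required.
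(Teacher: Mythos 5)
Your proof is correct, and it diverges from the paper's in a meaningful way. The paper works directly on the Gram matrix: for a non-ideal vertex it row-reduces $\det G$ to a $3\times 3$ determinant, applies $\varepsilon_1\varepsilon_j-(\tau'_{1j})^2=-\tau_{1j}^2$ together with the generalized cosine law to recognize the link's Gram matrix, and quotes the classical identity $\det=-(A^1)^2$; your Case $\varepsilon_1=\pm 1$ is that same computation in geometric clothing, since your column reduction of $V$ and the normalization of the $E_j$ reproduce exactly the paper's reduced $3\times 3$ matrix as (up to sign) the Gram matrix of $\{E_2,E_3,E_4\}$, via the same two identities. The genuine divergence is the ideal case: the paper only treats the configuration where \emph{all four} vertices are ideal, by brute-force expansion of the $4\times 4$ determinant with zero diagonal followed by the ideal cosine law and Heron's formula, whereas your horospherical decomposition $v_j=\alpha_j v_1+\beta_j u+E_j$ handles $\varepsilon_1=0$ with $v_2,v_3,v_4$ arbitrary. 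Combined with your first case this proves the identity for every index $i$ in all 15 types; the paper's two cases, read literally, do not cover the statement for an ideal vertex $v_i$ when some other vertex is non-ideal, even though the main theorem uses the lemma for several vertices at once, so your argument actually closes a small gap. Your starting identity $\det G=\det(V^TJV)=-(\det V)^2$ also yields $\det G\le 0$ for free. I checked the constants: $\beta_j=\tau_{1j}/2$, the foot relation $E_j=2\tau_{1j}E^f_j$, $|\det(v_1,u)|=2$ on the signature $(1,1)$ plane, and the link area $\frac18 b^1_{23}b^1_{24}\sin a_{12}=A^1/8$ (with the paper's convention that $b^i_{jk}$ is twice the Euclidean side length) combine to $|\det V|=\tau_{12}\tau_{13}\tau_{14}A^1$ as claimed.
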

\begin{proof} Case 1. If one of vertex is not in $C^+$, say $\varepsilon_1=\pm 1.$ Then
\begin{align*}
\det G&=\frac1{\varepsilon^3_1}\det\left(
\begin{matrix}
\varepsilon_1              & \tau'_{12}                 & \tau'_{13}                 & \tau'_{14} \\
\varepsilon_1\tau'_{12}    & \varepsilon_1\varepsilon_2 & \varepsilon_1\tau'_{23}    & \varepsilon_1\tau'_{24} \\
\varepsilon_1\tau'_{13}    & \varepsilon_1\tau'_{23}    & \varepsilon_1\varepsilon_3 & \varepsilon_1\tau'_{34} \\
\varepsilon_1\tau'_{14}    & \varepsilon_1\tau'_{24}    & \varepsilon_1\tau'_{34}    & \varepsilon_1\varepsilon_4
\end{matrix}
\right)\\
&=\frac1{\varepsilon^3_1}\det\left(
\begin{matrix}
\varepsilon_1   
& \tau'_{12}                 
& \tau'_{13}                 
& \tau'_{14} \\
0               
& \varepsilon_1\varepsilon_2-(\tau'_{12})^2 
& \varepsilon_1\tau'_{23}-\tau'_{12}\tau'_{13}    
& \varepsilon_1\tau'_{24}-\tau'_{12}\tau'_{14} \\
0               
& \varepsilon_1\tau'_{23}-\tau'_{12}\tau'_{13}    
& \varepsilon_1\varepsilon_3-(\tau'_{13})^2  
& \varepsilon_1\tau'_{34}-\tau'_{13}\tau'_{14} \\
0               
& \varepsilon_1\tau'_{24}-\tau'_{12}\tau'_{14}    
& \varepsilon_1\tau'_{34}-\tau'_{13}\tau'_{14}    
& \varepsilon_1\varepsilon_4-(\tau'_{14})^2 
\end{matrix}
\right)\\
&=\det\left(
\begin{matrix}
  \varepsilon_1\varepsilon_2-(\tau'_{12})^2 
& \varepsilon_1\tau'_{23}-\tau'_{12}\tau'_{13}    
& \varepsilon_1\tau'_{24}-\tau'_{12}\tau'_{14} \\
  \varepsilon_1\tau'_{23}-\tau'_{12}\tau'_{13}    
& \varepsilon_1\varepsilon_3-(\tau'_{13})^2  
& \varepsilon_1\tau'_{34}-\tau'_{13}\tau'_{14} \\       
  \varepsilon_1\tau'_{24}-\tau'_{12}\tau'_{14}    
& \varepsilon_1\tau'_{34}-\tau'_{13}\tau'_{14}    
& \varepsilon_1\varepsilon_4-(\tau'_{14})^2 
\end{matrix}
\right)\\
&\overset{(a)}{=}\det\left(
\begin{matrix}
  -(\tau_{12})^2 
& -\tau_{12}\tau_{13}\rho'^1_{23}    
& -\tau_{12}\tau_{14}\rho'^1_{24} \\
  -\tau_{12}\tau_{13}\rho'^1_{23}  
& -(\tau_{13})^2  
& -\tau_{13}\tau_{14}\rho'^1_{34} \\       
  -\tau_{12}\tau_{14}\rho'^1_{24}    
& -\tau_{13}\tau_{14}\rho'^1_{34}   
& -(\tau_{14})^2 
\end{matrix}
\right)\\
&=(\tau_{12}\tau_{13}\tau_{14})^2
\det\left(
\begin{matrix}
  -1 
& -\rho'^1_{23}    
& -\rho'^1_{24} \\
  -\rho'^1_{23}  
& -1  
& -\rho'^1_{34} \\       
  -\rho'^1_{24}    
& -\rho'^1_{34}   
& -1 
\end{matrix}
\right)\\
&\overset{(b)}{=}(\tau_{12}\tau_{13}\tau_{14})^2(-(A^1)^2).
\end{align*}

In the step $(a)$ we use the fact $\varepsilon_i\varepsilon_j-(\tau'_{ij})^2 =  -(\tau_{ij})^2 $ which is easily verified using the definition (\ref{fml:tau1}) and (\ref{fml:tau2}). We also use the law of cosine for a generalized hyperbolic triangle(\cite{GL09} Lemma 3.1):

$$\rho'^j_{kl}=\frac{-\varepsilon_j\tau'_{kl}+\tau'_{jk}\tau'_{jl}}{\tau_{jk}\tau_{jl}}.$$

In the step $(b)$ we use the law of cosine for a spherical or hyperbolic triangle. For details of calculations, see \cite{Fe89} pp 167-171. 

Case 2. If all vertexes are on $C^+,$ i.e., $\varepsilon_i=0$ for $i=1,2,3,4,$ then, by definition (\ref{fml:tau2}),
\begin{align*}
\det G&=\frac1{16}\det\left(
\begin{matrix}
0              & e^{x_{12}}     & e^{x_{13}}     & e^{x_{14}}  \\
e^{x_{12}}     & 0              & e^{x_{23}}     & e^{x_{24}}  \\
e^{x_{13}}     & e^{x_{23}}     & 0              & e^{x_{34}}  \\
e^{x_{14}}     & e^{x_{24}}     & e^{x_{34}}     & 0
\end{matrix}
\right)\\
&=\frac1{16} (e^{2x_{12}+2x_{34}}+e^{2x_{13}+2x_{24}}+e^{2x_{14}+2x_{23}}\\
&\ \ \ \ \ \ \ \ 
-2e^{x_{12}+x_{34}+x_{13}+x_{24}}
-2e^{x_{13}+x_{24}+x_{14}+x_{23}}
-2e^{x_{14}+x_{23}+x_{12}+x_{34}})\\
&\overset{(c)}{=}\frac1{16}e^{2x_{12}+2x_{13}+2x_{14}}
(\frac{(b^1_{34})^4}{16}+\frac{(b^1_{24})^4}{16}+\frac{(b^1_{23})^4}{16}\\
&\ \ \ \ \ \ \ \ \ \ \ \ \ \ \ \ \ \ \ \ \ \ \ \ \ \ \ \ \ \ \ \ \ \ \ \ \ \  -2\frac{(b^1_{34}b^1_{23})^2}{16}-2\frac{(b^1_{23}b^1_{24})^2}{16}-2\frac{(b^1_{34}b^1_{24})^2}{16})\\
&\overset{(d)}{=}(\frac{e^{x_{12}}}2\frac{e^{x_{13}}}2\frac{e^{x_{14}}}2)^2(-(A^1)^2).
\end{align*}

In the step $(c)$ we use the law of cosine for an ideal hyperbolic triangle (\cite{GL09} Appendix A):
$$\frac{(b^i_{jk})^2}{4}=e^{x_{jk}-x_{ij}-x_{ik}}.$$

In the step $(d)$ we use the law of cosine for a Euclidean triangle. In fact, it is Heron's formula of the area of a Euclidean triangle.
\end{proof}

\section{Jacobian matrix}

First, a generalized hyperbolic tetrahedron is determined by its edge lengths uniquely up to isometries. Therefore the dihedral angle $a_{ij}$ for $i,j\in \{1,2,3,4\}$ is a function of the lengths $x_{12}, x_{13}, x_{14},x_{23},x_{24},x_{34}.$ 

\begin{proof}[Proof of Theorem \ref{thm:main}]

To prove of the theorem, we need to calculate $\frac{\partial a_{ij}}{\partial x_{kl}}, \frac{\partial a_{ij}}{\partial x_{ik}}$ and $\frac{\partial a_{ij}}{\partial x_{ij}}.$

\begin{align*}
\frac{\partial a_{ij}}{\partial x_{kl}}
&=\frac{\partial a_{ij}}{\partial b^i_{kl}}\cdot \frac{\partial b^i_{kl}}{\partial x_{kl}}\\
&\overset{(e)}{=}\frac{\rho^i_{kl}}{A^i}\cdot \frac{\tau_{kl}}{A_{ikl}}\\
&\overset{(f)}{=}\frac{\tau_{kl}}{\tau_{il}\tau_{ik}A^i}\\
&=\frac{\tau_{kl}}{\tau_{il}\tau_{ik}A^i}
\cdot\frac1{\sin a_{ij}\sin a_{kl}} \cdot \sin a_{ij}\sin a_{kl}\\
&\overset{(g)}{=}\frac{\tau_{kl}}{\tau_{il}\tau_{ik}A^i}
\cdot \frac{\rho^j_{ik}\rho^j_{ij}\rho^k_{ij}\rho^k_{jl}}{A^jA^k}\cdot \sin a_{ij}\sin a_{kl}\\
&\overset{(h)}{=}\frac{\tau_{kl}}{\tau_{il}\tau_{ik}A^i}
\cdot \frac{\sqrt{-\det G_{ll}}\sqrt{-\det G_{kk}}\sqrt{-\det G_{jj}}\sqrt{-\det G_{ii}}}
{\tau_{ij}\tau_{jk}\tau_{ij}\tau_{jl}\tau_{ki}\tau_{kl}\tau_{kj}\tau_{kl}A^jA^k}
\cdot \sin a_{ij}\sin a_{kl}\\
&\overset{(i)}{=}\sqrt{\frac{\det G_{ii}\det G_{jj}\det G_{kk}\det G_{ll}}{-(\det G)^3}}\cdot \sin a_{ij}\sin a_{kl}.
\end{align*}

In the step $(e)$, Lemma \ref{thm:LC} and Lemma \ref{thm:FC} are used. 

In the step $(f)$, the definition (\ref{fml:FA}) is used. 

In the step $(g)$, the definition (\ref{fml:LA}) is used. 

In the step $(h)$, the definition (\ref{fml:FA}) and Lemma \ref{thm:2Gram} are used. 

In the step $(i)$, Lemma \ref{thm:3Gram} is used.

\begin{align*}
\frac{\partial a_{ij}}{\partial x_{ik}}
&=\frac{\partial a_{ij}}{\partial b^j_{ik}}\cdot \frac{\partial b^j_{ik}}{\partial x_{ik}}\\
&=\frac{\rho^j_{kl}}{A^j}(-\cos a_{jk}) \cdot \frac{\tau_{ik}}{\sqrt{-\det G_{ll}}}\\
&\overset{(j)}{=}\frac{\rho^j_{kl}}{A^j} \cdot \frac{\tau_{ik}}{\sqrt{-\det G_{ll}}}
\cdot \frac1{\sin a_{ij}\sin a_{ik}} \cdot \sin a_{ij}\sin a_{ik}\cdot(-\cos a_{jk})\\
&=\frac{\sqrt{-\det G_{ii}}}{\tau_{jk}\tau_{jl}A^j}\cdot \frac{\tau_{ik}}{\sqrt{-\det G_{ll}}}
\cdot \frac{\sqrt{-\det G_{ll}}\sqrt{-\det G_{kk}}\sqrt{-\det G_{ll}}\sqrt{-\det G_{jj}}}
{\tau_{ij}\tau_{jk}\tau_{ij}\tau_{jl}\tau_{kj}\tau_{ki}\tau_{ki}\tau_{kl}A^jA^k}\\
& \ \ \ \ \ \ \ \ \ \ \ \ \ \ \ \ \ \ \ \ \ \ \ \ \ \ \ \ \ \ \ \ \ \ \ \ \ \ \ \ 
 \ \ \ \ \ \ \ \ \ \ \ \ \ \ \ \ \ \ \ \ \ \ \ \ \     \cdot \sin a_{ij}\sin a_{ik}\cdot(-\cos a_{jk})\\
&=\sqrt{\frac{\det G_{ii}\det G_{jj}\det G_{kk}\det G_{ll}}{-(\det G)^3}} \cdot \sin a_{ij}\sin a_{ik}\cdot(-\cos a_{jk}).
\end{align*}

\begin{align*}
\frac{\partial a_{ij}}{\partial x_{ij}}
&=\frac{\partial a_{ij}}{\partial b^i_{jk}}\cdot \frac{\partial b^i_{jk}}{\partial x_{ij}}
+\frac{\partial a_{ij}}{\partial b^i_{jl}}\cdot \frac{\partial b^i_{jl}}{\partial x_{ij}}\\
\end{align*}
By the symmetry of $k$ and $l$, we only need to calculate the first term.
\begin{align*}
&\frac{\partial a_{ij}}{\partial b^i_{jk}}\cdot \frac{\partial b^i_{jk}}{\partial x_{ij}}\\
&=\frac{\rho^i_{kl}}{A^i}(-\cos a_{ik}) \cdot \frac{\tau_{jk}}{\sqrt{-\det G_{ll}}}(-\rho'^j_{ki})\\
&\overset{(k)}{=}\frac{\rho^i_{kl}}{A^i}\cdot \frac{\tau_{jk}}{\sqrt{-\det G_{ll}}}\cdot \cos a_{ik}
\cdot \frac{\cos a_{jl}+\cos a_{ij}\cos a_{jk}}{\sin a_{ij}\sin a_{jk}}\\
&=\frac{\rho^i_{kl}}{A^i}\cdot \frac{\tau_{jk}}{\sqrt{-\det G_{ll}}} \cdot \frac1{\sin a_{ij}\sin a_{jk}}
\cdot (\cos a_{ik}\cos a_{jl}+\cos a_{ik}\cos a_{ij}\cos a_{jk})\\
&\overset{(l)}{=}\sqrt{\frac{\det G_{ii}\det G_{jj}\det G_{kk}\det G_{ll}}{-(\det G)^3}}
\cdot (\cos a_{ik}\cos a_{jl}+\cos a_{ik}\cos a_{ij}\cos a_{jk}).
\end{align*}

In the step $(k)$, the law of cosine for a hyperbolic or spherical triangle is used. For a Euclidean triangle, we use the fact:
$$1=\frac{\cos a_{jl}+\cos a_{ij}\cos a_{jk}}{\sin a_{ij}\sin a_{jk}}.$$

In the step $(l)$, compare what we need to compute with the result of the step $(j)$ in the last formula. They are the same if we switch $i$ and $j$. Hence the step $(l)$ holds.

Therefore 
\begin{align*}
&\frac{\partial a_{ij}}{\partial x_{ij}}=\sqrt{\frac{\det G_{ii}\det G_{jj}\det G_{kk}\det G_{ll}}{-(\det G)^3}}\\
&\cdot (\cos a_{ik}\cos a_{jl}+\cos a_{ik}\cos a_{ij}\cos a_{jk}
+\cos a_{il}\cos a_{jk}+\cos a_{il}\cos a_{ij}\cos a_{jl})\\
&=\sqrt{\frac{\det G_{ii}\det G_{jj}\det G_{kk}\det G_{ll}}{-(\det G)^3}} \cdot \sin^2 a_{ij}\cdot w_{ij}.
\end{align*}

\end{proof}

\section*{Acknowledgment} 

The author would like to thank Feng Luo for encouragement and valuable suggestion. He thanks Tian Yang for helpful discussion.

\end{document}